\setlist[enumerate]{leftmargin=.5in}
\setlist[itemize]{leftmargin=.5in}
\crefname{hypothesis}{Hypothesis}{Hypotheses}
\title{Explicit complex time integrators for stiff problems.}
\author{Jithin D. George\thanks{Department of Engineering Sciences and Applied Mathematics, Northwestern University, Evanston, IL.\\
(\email{jithindgeorge93@gmail.com}, \email{samyjung@outlook.com}, \email{niallmm@gmail.com})}
\and Julian Koellermeier
\and Samuel Y. Jung\footnotemark[1]
\and Niall M. Mangan\footnotemark[1]}
\newcommand*{\addFileDependency}[1]{
  \typeout{(#1)}
  \@addtofilelist{#1}
  \IfFileExists{#1}{}{\typeout{No file #1.}}
}
\newcommand*{\myexternaldocument}[1]{%
    \externaldocument{#1}%
    \addFileDependency{#1.tex}%
    \addFileDependency{#1.aux}%
    	}
\DeclareMathOperator{\sech}{sech}
\newenvironment{mat}{\left[ \begin{array}{ccccccccccccc}}{\end{array}\right]}
\newcommand\bcm{\begin{mat}}
\newcommand\ecm{\end{mat}}
\DeclareDocumentCommand\rootedtree{o}{\Forest{rooted tree [#1]}}
\begin{document}
\maketitle

\begin{abstract}
Most numerical methods for time integration use real-valued time steps. Complex time steps, however, can provide an additional degree of freedom, as we can select the magnitude of the time step in both the real and imaginary directions. We show that specific paths in the complex time plane lead to expanded stability regions, providing clear computational advantages for complex-valued systems. In particular, we highlight the Schrödinger equation, for which complex time integrators can be uniquely optimal. Furthermore, we demonstrate that these benefits extend to certain classes of real-valued stiff systems by coupling complex time steps with the Projective Integration method.
\end{abstract}

\begin{keywords}
complex time steps,
complex time integrators ,
absolute stability ,
Schrodinger equation,
Projective Integration
\end{keywords}

\begin{AMS}
  30-08, 65E05, 65L05, 65L04, 65M12, 65M20
\end{AMS}

\section{Introduction}
The complex plane has often been employed to solve real-world problems. Two direct examples are the solution of complicated real-valued integrals via contour integrals \cite{brown2009complex} and the usage of the intuition gained from poles in the complex plane in control-theory applications \cite{nise2020control}. The complex plane provides new insights to real problems, allowing mathematicians to impact practical applications. However, most methods for numerically solving differential equations from time $a$ to time $b$ construct a path utilizing only the real line. Here, we highlight particular cases where stepping into the complex plane provides advantages for numerical time integration. 

The idea of taking complex time steps is not new. Early work on time stepping in the complex plane focused on systematically avoiding singularities during numerical integration of singular differential equations \cite{corliss1980integrating}. Complex time steps have also been used in operator splitting schemes. One of the earliest works in this field is \cite{chambers2003symplectic}, where complex substeps were used to avoid negative and unstable substeps, significantly reducing the truncation error. Following this, complex time steps and complex coefficients have been incorporated into operator splitting methods for various parabolic and nonlinear evolution problems \cite{Blanes_Casas_Murua_2024,blanes2022applying,castella2009splitting,auzinger2017practical,casas2021compositions,blanes2010splitting, hansen2009high, casas2022high}. Complex coefficients have been used for higher order methods for linear systems \cite{orendt2009geometry, filatov2006complex} and in  stiff differential algebraic systems \cite{al2006rosenbrock}. More recently, \cite{buvoli2019constructing} used Cauchy's integral formula as inspiration to introduce a general framework for deriving integrators using interpolating polynomials in the complex plane. This results in parallel Backward Differentiation Formula (BDF) methods with improved stability regions. 

The current paper builds on ideas introduced in \cite{george2021walking} and focuses on complex time integrators in the context of stiff problems. These stiff problems often result in bounded stability regions for time integration schemes. 
We show that the additional dimension offered by complex time steps allows to tune stability regions and design explicit time integrators with significantly enhanced stability properties. 

Our paper is organized as follows: In Section \ref{sec:expanded_stability}, we illustrate how to use complex time steps to create customized stability regions and recreate the stability regions of the Runge-Kutta-2 and Runge-Kutta-3 methods using complex Forward Euler time steps. Section \ref{sec:assymetric_stability} explores asymmetric stability regions, a unique property of complex time integrators. We show that the asymmetric stability regions generated by complex time integrators  are optimal for complex-valued systems like the nonlinear Schrodinger equation, and highlight a factor of 2 reduction in computational time for the same accuracy.  Finally, in Section \ref{sec:PI}, we combine complex time steps with the Projective Integration method \cite{maclean2015convergence, pi1, pi5, pi4} to tackle  large spectral gaps in stiff systems with eigenvalues that lie off the real axis.

The code to reproduce the numerical experiments in this paper is available on GitHub \footnote{\url{https://github.com/Dirivian/complex_time_integrators}} .

\section{Expanded regions of stability with complex time-steps} \label{sec:expanded_stability}

In this section, we illustrate how complex time steps can be leveraged in numerical time integrators to expand the  region of absolute stability. Consider the scalar Dahlquist test problem \cite{hairer1993solving} with parameter $\lambda \in \mathbb{C}$ given in Equation \eqref{eq:dahlquist}.
\begin{align}
    \dot{y} = \lambda y, \quad y(0) = y_0.
    \label{eq:dahlquist}
\end{align}
Equation \eqref{eq:dahlquist} can be integrated with a desired time integrator to obtain the following numerical solution after one time step $\Delta t \in \mathbb{R}_+$.
\begin{align}
    y(t+\Delta t) = \Phi(\lambda \Delta t)y(t),
\end{align}
where $\Phi(z)$ is the stability function of the integrator \cite{leveque2007finite}. The region $z \in \mathbb{C}$ where $|\Phi(z)|\leq 1$ is the region of absolute stability for the integrator.
For the 1-step first-order Forward Euler (FE) method, the 2-step second-order RK2 method and the 3-step third-order RK3 method, the stability regions are given by Equations \eqref{eq:FE}, \eqref{eq:rk2} and \eqref{eq:rk3}, respectively.
\begin{align}
    \Phi_{FE}(z) &= 1 + z, \label{eq:FE} \\
    \Phi_{RK2}(z)    &= 1 + z + \frac{z^2}{2}, \label{eq:rk2} \\
    \Phi_{RK3}(z)    &= 1 + z + \frac{z^2}{2} + \frac{z^3}{3!}. \label{eq:rk3}
\end{align}
The stability regions of the FE, RK2, and RK3 methods are shown in Figure \ref{4_1} (left).

Now, instead of real time steps, we consider integrating Equation \eqref{eq:dahlquist} using a composition of Forward Euler steps with complex time increments. Specifically, we apply up to three successive Forward Euler sub-steps with step sizes $w_1\Delta t$, $w_2\Delta t$, and $w_3\Delta t$, where $w_i \in \mathbb{C}$. The stability polynomial of this 3-step complex Forward Euler (cFE)  method is given by 
\begin{align}
\Phi_{cFE}(z) &= (1 + w_1 z)(1 + w_2 z) (1 + w_3 z)\\&= 1 + (w_1 + w_2 + w_3)z + (w_1 w_2 +w_2w_3 +w_1w_3)z^2 + w_1 w_2 w_3z^3.
    \label{eq:complexFE3}
\end{align}
By picking certain complex values of $w_1, w_2, w_3$ in \eqref{eq:complexFE3}, we can tailor the stability region of the cFE integrator in \eqref{eq:complexFE3} to be the same as that of FE in \eqref{eq:FE}, RK2 in \eqref{eq:rk2}, or RK3 in \eqref{eq:rk3}. 
Each choice of $w_1, w_2, w_3$ represents a different path from $t \in \mathbb{R}$ to $t+ \Delta t \in \mathbb{R}$ through the complex time plane. 
We highlight three different choices, sketched also in Figure \ref{4_1} (right):
\begin{enumerate}
    \item $w_1 =  1, (w_2, w_3 = 0)$, leading to the 1-step first-order complex Euler method (cFE1).
    \item $w_1 = \frac{1}{2}+\frac{1}{2}i, w_2 = \frac{1}{2}-\frac{1}{2}i, (w_3 = 0)$, leading to the 2-step second-order complex Euler method (cFE2).
    \item $w_1 \approx 0.186731 +0.480774i, w_2 \approx 0.626538, w_3 \approx 0.186731 - 0.480774i$, leading to the 3-step third-order complex Euler method (cFE3).
\end{enumerate}
The corresponding stability regions in Figure \ref{4_1} (right) recreate the well-known stability regions obtained for the Forward Euler \eqref{eq:FE}, RK2 \eqref{eq:rk2} and RK3 \eqref{eq:rk3} methods. 

\begin{figure}[h!]
\begin{centering}
    \includegraphics[width=5in]{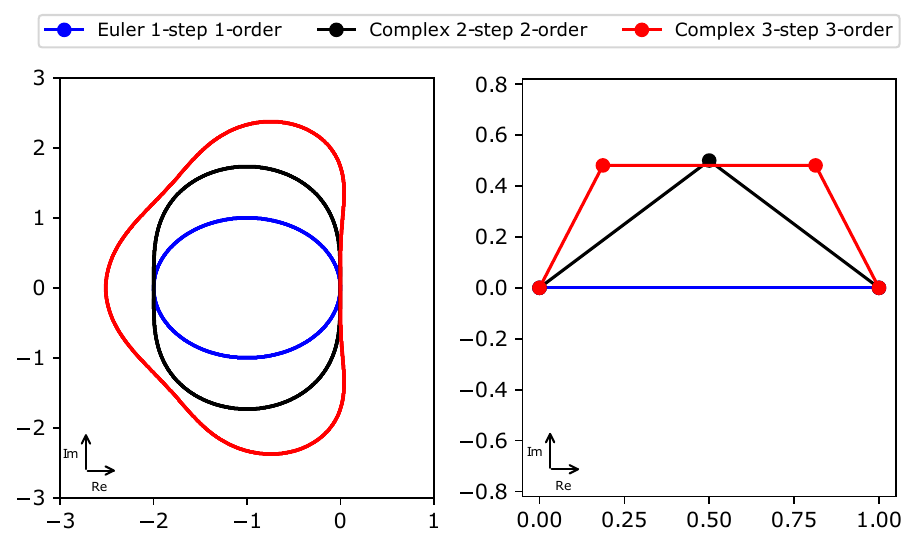}
    \caption{Three different complex time-stepping paths  (right). The 1-step first-order cFE1, 2-step second-order cFE2 and 3-step third-order cFE3 methods share the same stability regions as the FE \eqref{eq:FE}, RK2 \eqref{eq:rk2} and RK3 \eqref{eq:rk3} methods (left). }
     \label{4_1}
\end{centering}
\end{figure}

Since the paths in Figure \ref{4_1} (right) have the same stability polynomials as the corresponding FE, RK2, and RK3 methods, they also have the same order of accuracy for linear differential equations. Note that the paths sketched in Figure \ref{4_1} (right) are not the only ones. In Figure \ref{fig:pathslinear}, all six 3-step 3rd order paths (for linear differential equations) are shown along with the two 2-step 2nd order paths and the single step forward Euler path. Previous work \cite{filatov2006complex, orendt2009geometry} has demonstrated that appropriately chosen complex time steps can increase the order of accuracy for linear differential equations. Here, however, our focus is on the stability properties of such complex time integrators.

\begin{figure}[h!]
\begin{centering}
    \includegraphics[width=5.2in]{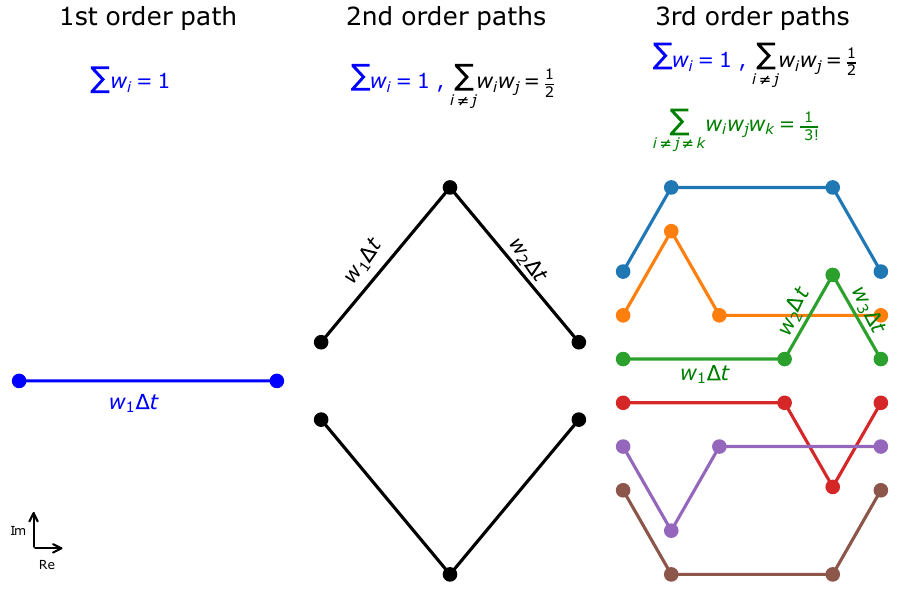}
    \caption{All possible 1-step first-order, 2-step second-order, and 3-step third-order complex Forward Euler methods for linear differential equations.}
\end{centering}
\label{fig:pathslinear}
\end{figure}

Equation \eqref{eq:complexFE3} suggests the possibility of trading second-order and third-order accuracy in order to obtain customized stability regions. Note, however, that first-order accuracy is necessary to obtain a consistent numerical solution after a total time step of $\Delta t \in \mathbb{R}$. In Figure \ref{4_2}, we illustrate a 3-step second-order Forward Euler method (cFE3-2)(blue) and 3-step first-order Forward Euler method (cFE3-1) (yellow) with expanded stability along the negative real axis compared to the 3-step third-order Forward Euler method (cFE3) (red). Note that the cFE3-1 method is achieved through purely real time steps.

\begin{figure}[h!]
\begin{centering}
    \includegraphics[width=5in]{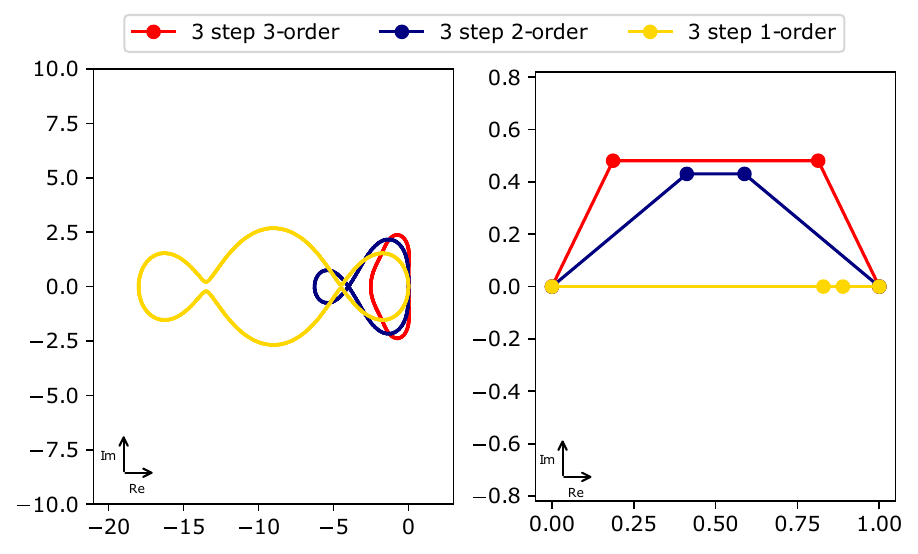}
    \caption{By sacrificing accuracy, the stability region can be expanded (left) using complex time steps, as demonstrated by these optimized 3-step complex Euler paths (right) with third-order (cFE3), second-order (cFE3-2), and first-order (cFE3-1) accuracy.}
\label{4_2}
\end{centering}
\end{figure}

The idea that the order of accuracy can be sacrificed for enhanced stability is not new. Expanded stability regions have been studied in the context of Runge-Kutta methods with real-valued coefficients, where the order of accuracy is sacrificed to obtain a larger stability region \cite{riha1972optimal, lawson1966order,abdulle2000roots, bogatyrev2004effective, ketcheson2013optimal, parsani2013optimized, kubatko2014optimal}. For problems with purely real or purely imaginary eigenvalues, the  optimal stability functions/polynomials have been long-studied \cite{riha1972optimal,bogatyrev2004effective,abdulle2000roots}. In the past decade, Ketcheson and co-authors \cite{parsani2013optimized, kubatko2014optimal, ketcheson2013optimal} have explored optimal stability functions in the context of systems $\dot{y} = L y$, where $y\in \mathbb{R}^n$ and $L\in \mathbb{R}^{n \times n}$ is a matrix with arbitrary spectrum. In \cite{ketcheson2013optimal}, Ketcheson and Ahmadia describe an optimization algorithm to obtain the optimal stability polynomial that allows the maximal time step depending on the spectrum of $L$. This algorithm called RKOpt \cite{ketcheson2020rk} is publicly available through Nodepy \cite{ketcheson2020nodepy}. However, the extension of this technique to complex time steps is currently missing.

To our knowledge, this work is the first to use complex time steps to tailor stability polynomials. At first glance, the advantages of complex time integration over traditional real-valued Runge-Kutta methods may not be obvious, particularly given the additional cost of complex arithmetic. Moreover, not all real-valued nonlinear differential equations admit natural extensions into the complex plane. Nevertheless, our approach provides clear benefits in several contexts commonly encountered in applications, such as systems with asymmetric eigenvalue spectra, complex-valued quantum dynamics, and stiff problems with spectral gaps. 

The following sections describe how tailored complex time integration enables larger time steps, expands stability regions, and improves computational efficiency. The next section highlights the benefit of asymmetric stability regions which are inaccessible to traditional real-valued integrators but achievable with complex time steps.

\section{Asymmetrically expanded regions of stability} \label{sec:assymetric_stability}

Complex time integration offers unique stability advantages for problems with complex-valued solutions. Consider the complex-valued Kohn-Sham equations which describe ultra-fast electron dynamics and are known to require extremely small time steps for stability when using standard explicit Runge-Kutta methods \cite{kang2019pushing}. There is an ongoing and active search for efficient time integration techniques with large stable time steps \cite{an2020quantum, an2022parallel, kononov2022electron} for the simulation of quantum systems. Kononov {\sl et al.} \cite{kononov2022electron} review a number of numerical methods for the simulation of electron dynamics using the complex-valued Kohn-Sham equations. Given the expense of implicit methods, they express a specific need for explicit methods with expanded stability regions. Complex time integrators can help fill this gap since they incur minimal additional cost for complex-valued problems and can potentially reduce computational effort by expanding stability regions to allow for larger time-steps.   

In this section, we explore a unique feature of complex time integration: asymmetric stability regions. Complex time steps can result in stability polynomials with complex valued coefficients. The associated stability regions can then be asymmetric about the real axis, a feature that is not present for real-valued coefficients.  For real-valued problems, this asymmetry can be disadvantageous since the eigenvalues of real-valued problems appear as complex conjugate pairs symmetric about the real axis. In contrast, for complex-valued problems which have asymmetric eigenvalue spectra, complex coefficients can yield substantially larger stability regions in the relevant direction. 

To demonstrate these ideas, we first consider a generic example, followed by simulations of the linear and nonlinear Schrödinger equations as illustrative application problems.

\subsection{A simple example}

Consider the linear system $\dot{y} = A y$ for $A \in \mathbb{R}^{n\times n}$ with eigenvalues visualized by the red dots in Figure \ref{4_3equation} (left). The naive 3-step 3rd order complex integrator cFE3 from the previous section (resulting in the stability domain bounded by the red curve) is obviously not optimally designed and would force a small time step size.
\begin{figure}
\begin{centering}
    \includegraphics[width=5in]{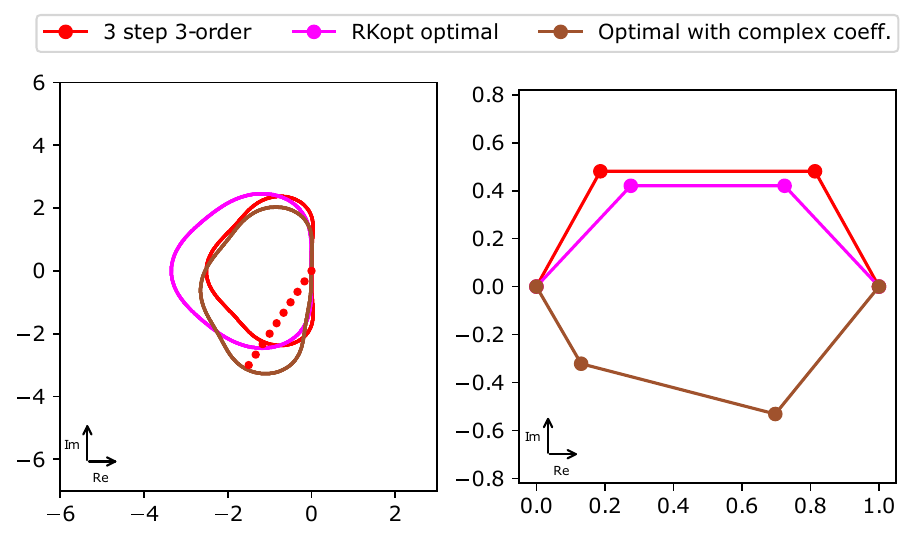}
    \caption{Complex coefficients in optimal stability polynomials often result in asymmetrical stability regions. The 3-step third-order (cFE3) complex integrator (red curves) is not optimal for the eigenvalues (red dots, left panel). RKOpt  \cite{ketcheson2020rk} yields the stability polynomial in Eq. \eqref{eq:RKOPT_stab}, corresponding to the optimal 3-stage second-order real integrator (magenta curves). An optimized 3-stage complex integrator (brown curves) achieves the stability polynomial  in Eq. \eqref{eq:complexOPT_stab} which allows the largest possible time steps for these eigenvalues.}
\label{4_3equation}
\end{centering}
\end{figure}

The magenta curve in Figure \ref{4_3equation} (left) sketches the better but still symmetric stability region of the stability polynomial given by
\begin{align}
    \Phi(z) = 1 + z + \frac{z^2}{2} + 0.1134 z^3
    \label{eq:RKOPT_stab},
\end{align}
which is the optimal stability polynomial with real-coefficients for a 3-stage second-order method obtained using RKOpt \cite{ketcheson2020rk} for these eigenvalues. 

As discussed in the previous section, we can recreate this stability region using three complex Forward Euler steps, leading to the magenta path in Figure \ref{4_3equation} (right). We can also choose three complex time steps such that the stability polynomial has complex-valued coefficients. This allows us to numerically find the better stability polynomials. The particular choice of $w_1 \approx  0.1306-0.3217i, w_2 \approx  0.5667-0.2097i,  w_2 \approx 0.3027+0.5314i$ leads to the stability polynomial in Equation \eqref{eq:complexOPT_stab}, visualized in Figure \ref{4_3equation} by the brown curve.
\begin{align}
    \Phi(z) = 1 + z + \frac{z^2}{2} + (0.1134- 0.06i) z^3.
\label{eq:complexOPT_stab}
\end{align}
The brown curve clearly demonstrates the advantage of complex time integration. The 3-step 2nd order complex integrator produces an asymmetric stability region that fully encompasses the relevant eigenvalues of the system (red dots), allowing larger stable time steps for the same spectrum compared to the 3-stage 2nd order integrator with real coefficients. This illustrates that complex coefficients can be used to expand the stability region in the directions where it is most needed, rather than being constrained by symmetry requirements as in the real-valued case.

We note that the 3-stage 2nd order optimized real integrator remains the optimal choice if one has to take time steps along both these eigenvalues and their complex conjugates, as occurs in real-valued systems with complex conjugated pairs of eigenvalues. Overall, these results illustrate that complex time integration combined with optimized stability polynomials provides a clear benefit for problems with asymmetric complex eigenvalue spectra, enabling larger stable time steps and more efficient simulations.

\subsection{Optimally stable integrators for the Schrodinger equation}

To demonstrate the expanded stability benefits of complex time steps on a practical system, we consider a classical complex-valued problem: the linear Schrodinger equation \cite{sakurai2020modern} given by Equation \eqref{eq:linear_Schrodinger},
\begin{align}
  i u_t +\frac{1}{2}u_{xx} = 0.
    \label{eq:linear_Schrodinger}
\end{align}

For typical space discretizations, the linear Schrodinger equation has eigenvalues $\lambda_i \in \mathbb{C}$ along the negative imaginary axis (see Figure \ref{4_4equation}). We show that for systems with all eigenvalues on either the positive or negative imaginary axis, the optimal first-order 2-stage integrator i.e the  first-order 2-stage integrator that allows the maximal stable time step size,  must have complex coefficients.

\begin{theorem}\label{theorem1}
For a linear system $\dot{y} = A y$, where $y \in \mathbb{R}^n$ and $A \in \mathbb{R}^{n \times n}$ is a matrix whose eigenvalues $\lambda \in \mathbb{C} \setminus \mathbb{R}$ lie entirely on either the positive imaginary axis, i.e., $\lambda = +i|\lambda|$, or the negative imaginary axis, i.e., $\lambda = -i|\lambda|$, the optimal stability polynomial for a 2-stage first-order integrator has complex coefficients. 
\end{theorem}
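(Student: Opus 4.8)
The plan is to reduce the statement to a one-parameter optimization and then show that its maximizer is genuinely complex. Since a $2$-stage explicit method has a stability polynomial of degree at most two, and first-order consistency forces the first two Taylor coefficients to agree with $e^z$, every candidate has the form $\Phi(z) = 1 + z + c_2 z^2$ with a single free coefficient $c_2 \in \mathbb{C}$. Writing $c_2 = a + bi$ with $a,b \in \mathbb{R}$, the real-coefficient methods are precisely those with $b = 0$, so the theorem amounts to showing that no real $c_2$ can maximize the admissible step size.

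Next I would encode the optimality criterion on the axis. Because all eigenvalues lie on one side of the imaginary axis, I may assume (via the conjugation symmetry $z \mapsto \bar z$, $c_2 \mapsto \bar c_2$, which exchanges the two cases) that the spectrum lies on the negative imaginary axis, so the scaled eigenvalues are $z = -iy$ with $y = |\lambda|\,\Delta t \ge 0$. ``Maximal stable step size'' then means maximizing the length $y_{\max}$ of the segment $[0,y_{\max}]$ of the negative imaginary axis contained in the stability region. A direct computation gives
\begin{equation}
|\Phi(-iy)|^2 - 1 = y^2\, g(y), \qquad g(y) = (a^2+b^2)\,y^2 + 2b\,y + (1 - 2a).
\end{equation}
Since $g$ is an upward parabola, the stable set on the axis is the interval between its roots; requiring the origin to be stable forces $g(0) = 1 - 2a \le 0$, which places $0$ between the roots, and then $y_{\max}$ is simply the larger root of $g$.

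With this reduction the two regimes are easy to compare, which is the heart of the argument. Restricting to real coefficients ($b = 0$) gives $y_{\max}^2 = (2a-1)/a^2$, maximized at $a = 1$ with value $y_{\max} = 1$; this is the best any real-coefficient method achieves. To beat it I would fix $a = 1$ and view $y_{\max}$ as a function of $b$, namely $F(b) = \bigl(-b + \sqrt{1 + 2b^2}\bigr)/(1 + b^2)$, and compute $F'(0) = -1 \ne 0$. Hence for small $b < 0$ one has $F(b) > 1$, so the complex coefficient $c_2 = 1 + bi$ with $b<0$ yields a strictly longer stable segment than \emph{any} real coefficient. Because the supremum over complex coefficients therefore exceeds the real optimum $1$, no real polynomial can be optimal and the maximizer must have $b \ne 0$; the positive-axis case follows from the conjugation symmetry noted above.

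The hard part will be the third step: establishing cleanly that the optimum is attained and is governed by the larger root of $g$, which requires handling the boundary constraint $g(0)\le 0$ and ruling out degenerate configurations where the stable set fails to start at the origin. Verifying the sign of $F'(0)$ should be made robust (a short exact computation rather than a single numerical probe), but once the scalar reduction $\Phi(z) = 1 + z + c_2 z^2$ and the axis condition $g(y)\le 0$ are in place, the remainder is routine bookkeeping.
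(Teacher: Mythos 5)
Your proof is correct, and its setup coincides with the paper's: the same reduction to $\Phi(z)=1+z+c_2z^2$, the same quadratic stability condition (your $g(y)$ is exactly the paper's $f(y,k_r,k_i)$ after the conjugation that exchanges the positive and negative imaginary axes), and the same real-coefficient computation giving $y_{\max}=1$ at $c_2=1$. Where you genuinely diverge is the final step. The paper computes the global complex optimum explicitly: stability for small $y$ pins down $k_r=\tfrac{1}{2}$ (strictly speaking it forces $k_r\ge\tfrac{1}{2}$), and maximizing the upper root $2k_i/(\tfrac{1}{4}+k_i^2)$ over $k_i$ gives $k=\tfrac{1}{2}\pm\tfrac{1}{2}i$ with stable interval $[0,2]$. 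You instead perturb around the real optimum: $F(b)=\bigl(-b+\sqrt{1+2b^2}\bigr)/(1+b^2)$ has $F'(0)=-1\neq 0$, so some $c_2=1+bi$ with small $b<0$ strictly beats every real coefficient, hence no maximizer can be real. Your route is shorter and is essentially the paper's Theorem \ref{theorem2} (the perturbation result) specialized to $N=2$; what it gives up is precisely what the paper wants from this theorem, namely the explicit optimal integrator $\Phi_{c_-}$ and the quantitative factor-of-two gain in stable step size ($y_{\max}=2$ versus $1$), which drive the Schr\"odinger experiments and the claim of halved computational cost later in the paper. Two small repairs to your write-up: first, $y=0$ is always marginally stable since $|\Phi(0)|=1$, so what actually forces $g(0)\le 0$ is requiring stability on an interval of positive length together with continuity of $g$ (the paper shares this imprecision); second, since the theorem as stated presupposes that an optimal polynomial exists, your strict-improvement argument suffices as written --- the attainment issue you flag as ``the hard part'' is exactly what the paper's explicit computation settles for free, so if you want a self-contained statement you can either exhibit the optimum as the paper does or note that the admissible coefficients can be restricted to a compact set on which $y_{\max}$ is upper semicontinuous.
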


\begin{proof}
Without loss of generality, consider a system $\dot{y} = A y$ with all the eigenvalues $\lambda$ of $A$ on the positive imaginary axis, i.e., $\lambda = +|\lambda|i$. 
The stability polynomial for a general 2-stage  first-order integrator is given by
\begin{align}
\label{eq:schro_stab}
    \Phi(z) = 1 + z + k z^2,
\end{align}
where the value of the coefficient $k$ needs to optimized so that the region enclosed by $|\Phi(z)| \leq 1$ is maximal on the positive imaginary axis, enabling the maximal stable time step.

For the stability polynomial with purely real coefficients, i.e., $k \in \mathbb{R}$, the stability condition $|\Phi(z)| \leq 1$    on $ z = iy \in [0, iy_{\text{max}}]$ becomes 
\begin{align}
\label{eq:phi2}
    \left(k^2y^2+(1-2k)\right) y^2 \leq 0,
\end{align}
where $y = +|\lambda| \Delta t$. For the integrator to be stable, we thus require $y^2 \in [0, \frac{2k-1}{k^2}$]. The maximal value of $y$ is 1, which is achieved for $k = 1$. Hence, for real-valued integrators, the optimal stability polynomial for this system is 

\begin{align}
    \Phi_r(z) = 1 +z + z^2
    \label{eq:stabschro_real}
\end{align} and the maximum stable step is $\Delta t = \frac{1}{|\lambda_{max}|}$.

For the stability polynomial with the complex coefficient $k \in \mathbb{C}$, $k = k_r + k_i i$, $k_r, k_i \in \mathbb{R}$, the stability condition translates to 
\begin{align}
\label{eq:phi1b}
    (k_r^2+k_i^2)y^2- 2 k_iy+ (1-2k_r) =: f(y,k_r,k_i) \leq 0.
\end{align}
The stability needs to be fulfilled for vanishing $y=0$, requiring that the function $f(y,k_r,k_i)= (k_r^2+k_i^2)y^2- 2 k_iy+ (1-2k_r)$ needs to fulfill $f(0,k_r,k_i) \leq 0$, which fixes $k_r = 1/2$. The remaining upper root of $f(y,k_r,k_i)$ is given by $y = 2 k_i/(\frac{1}{4}+k_i^2)$, which attains a maximal value of $2$ for $k_i = \frac{1}{2}$. 

 
Thus, the optimal coefficient $k =\frac{1}{2}+\frac{1}{2}i$ achieves a stability domain of $y \in [0, 2]$. Hence, the optimal stability polynomial for this system is 
\begin{align}
    \Phi_c(z) = 1 +z + \left(\frac{1}{2}+\frac{1}{2}i\right)z^2
\end{align}
 and the maximum stable step is $\frac{2}{|\lambda_{max}|}$ which is twice as large as its real equivalent.

Similarly, if all the eigenvalues lie on the negative imaginary axis like in the case of the linear Schrodinger equation, the optimal stability polynomial is 
\begin{align}
    \Phi_{c_-}(z) = 1 +z + \left(\frac{1}{2}-\frac{1}{2}i\right)z^2
    \label{eq:stabschro_complex}
\end{align}
and the maximum stable step is also $\frac{2}{|\lambda_{max}|}$.
\end{proof}

The stability region using complex coefficients thus covers double the length on the negative imaginary axis compared to the one obtained using purely real coefficients, enabling stable time steps twice as large as also shown in Figure \ref{4_4equation}.
\begin{figure}
\begin{centering}
    \includegraphics[width=5in]{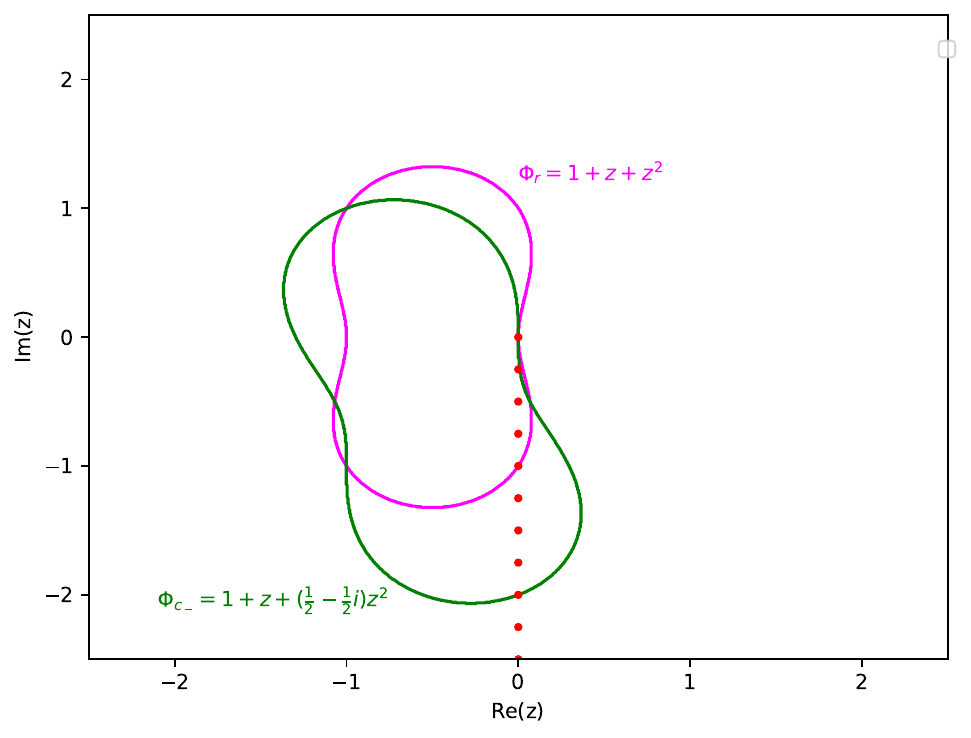}
    \caption{Eigenvalues of the linear Schrödinger equation lie along the negative imaginary axis (red dots). The optimal two-stage, two-step, first-order stability polynomials with real coefficients $\Phi_r$(Eq.~\ref{eq:stabschro_real}, magenta) and with complex coefficients  $\Phi_{c_-}$ (Eq.~\ref{eq:stabschro_complex}, green) yield distinct stability regions. Using complex coefficients with $\Phi_{c_-}$ expands the stability region, enabling stable time steps up to twice as large as those for the real-coefficient integrator $\Phi_r$.}
\label{4_4equation}
\end{centering}
\end{figure} 

The following theorem generalizes this idea further. Any first-order stability polynomial with real coefficients, for systems with eigenvalues either on the positive or negative imaginary axis, can be improved by introducing a small complex perturbation, thereby enlarging the stability region along the imaginary axis.

\begin{theorem}\label{theorem2}
Consider a linear system $\dot{y} = A y$, where $y \in \mathbb{R}^n$ and $A \in \mathbb{R}^{n \times n}$ is a matrix whose eigenvalues $\lambda \in \mathbb{C} \setminus \mathbb{R}$ lie entirely on either the positive imaginary axis ($\lambda = +i|\lambda|$) or the negative imaginary axis ($\lambda = -i|\lambda|$).

Let
\[
\Phi(z) = 1 + z + \sum_{j=2}^N a_j z^j,
\]
with real coefficients $a_j \in \mathbb{R}$, denote the stability polynomial of an $N$-stage (or step) first-order method. Suppose that its stability interval along the imaginary axis is $[0, iY_R]$, for some maximal range $|Y_R|$. Assume further that
\[
|\Phi(i y)| < 1, \quad \forall\, y \in (0, Y_R).
\]

Then, there exists a complex polynomial $q(z)$ and a scalar $\varepsilon > 0$ such that the perturbed polynomial
\[
\Phi_\varepsilon(z) := \Phi(z) + \varepsilon q(z)
\]
satisfies
\[
|\Phi_\varepsilon(i y)| \le 1, \quad \forall\, y \in [0, Y],
\]
for some $Y > Y_R$. Hence, introducing complex coefficients strictly enlarges the stability region along the imaginary axis.
\end{theorem}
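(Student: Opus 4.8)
The plan is to reduce the statement to the behavior of a single \emph{real} polynomial and then perturb it in the one direction that matters. Since $\Phi$ has real coefficients, $\overline{\Phi(iy)} = \Phi(-iy)$ for real $y$, so
\[
g(y) := |\Phi(iy)|^2 - 1 = \Phi(iy)\,\Phi(-iy) - 1
\]
is a real (in fact even) polynomial with $g(0) = 0$, $g(y) < 0$ on $(0,Y_R)$, and $g(Y_R) = 0$. Because $Y_R$ is the right endpoint of the stable interval and $g<0$ just to its left, $g$ rises to zero there, so generically $g'(Y_R) > 0$ (a transversal crossing). First-order accuracy forces the constant and linear coefficients of any admissible method to equal $1$, so I would restrict to perturbations with $q(z) = O(z^2)$, which keeps $\Phi_\varepsilon(z) = 1 + z + O(z^2)$ first-order.

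Writing $q(iy) = p(y) + i\,r(y)$ with $p,r$ real, a direct expansion gives
\[
|\Phi_\varepsilon(iy)|^2 - 1 = g(y) + 2\varepsilon\,\mathrm{Re}\!\left(\overline{\Phi(iy)}\,q(iy)\right) + \varepsilon^2\,|q(iy)|^2 .
\]
The decisive object is the linear-in-$\varepsilon$ term $h(y) := \mathrm{Re}(\overline{\Phi(iy)}\,q(iy))$. Since $|\Phi(iY_R)| = 1 \neq 0$, I can choose $q$ so that $q(iY_R)$ points ``opposite'' to $\Phi(iY_R)$, forcing $h(Y_R) < 0$: concretely $q(z) = c z^2$ with $c = \Phi(iY_R)/Y_R^2$ gives $q(iY_R) = -\Phi(iY_R)$ and hence $h(Y_R) = -|\Phi(iY_R)|^2 = -1$. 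When $\Phi$ is a good real polynomial, $\Phi(iY_R)$ is non-real (e.g.\ $\Phi_r(i) = i$ for $\Phi_r = 1+z+z^2$), so this $c$ is genuinely complex; for eigenvalues on the positive imaginary axis the still simpler choice $q(z) = iz^2$, mirroring the optimal coefficient $\tfrac12 + \tfrac12 i$ of Theorem~\ref{theorem1}, yields $h(Y_R) = -\mathrm{Im}(\Phi(iY_R))\,Y_R^2 < 0$ whenever $\mathrm{Im}(\Phi(iY_R)) > 0$.

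With such a $q$, at $y = Y_R$ one has $|\Phi_\varepsilon(iY_R)|^2 - 1 = 2\varepsilon h(Y_R) + O(\varepsilon^2) < 0$ for all small $\varepsilon>0$, so the former boundary point is pushed strictly into the interior. Solving $g(y) + 2\varepsilon h(y) = 0$ near $Y_R$ using $g'(Y_R) > 0$ and $h(Y_R) < 0$ shows the first zero of the perturbed function moves to the right by $\approx -2\varepsilon h(Y_R)/g'(Y_R) > 0$, producing a new endpoint $Y > Y_R$. To promote this to $|\Phi_\varepsilon(iy)|\le 1$ on the \emph{whole} of $[0,Y]$ I would invoke continuity and compactness: on any subinterval bounded away from $0$ and $Y_R$ one has $g \le -m < 0$, so the $O(\varepsilon)$ perturbation cannot flip the sign, and near $Y_R$ the term $2\varepsilon h < 0$ keeps the perturbed polynomial negative on both sides.

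I expect the main obstacle to be the uniform control at the two points where $g$ vanishes, and in particular the behavior near the origin. There, consistency forces $g(y) = (1-2a_2)y^2 + O(y^4)$, so the sign of the perturbed polynomial near $0$ is governed by the real part $\beta_r = a_2 + \varepsilon\,\mathrm{Re}(c)$ of the new quadratic coefficient, which must satisfy $\beta_r \ge 1/2$. If $a_2 > 1/2$ this is automatic for small $\varepsilon$, but in the borderline case $a_2 = 1/2$ the choice of $q$ must also respect $\mathrm{Re}(c) \ge 0$, which is compatible with $h(Y_R) < 0$ except in the degenerate situation $\Phi(iY_R) = -1$; that case, together with a possible non-transversal crossing $g'(Y_R) = 0$, would require replacing $q z^2$ by a higher-order term matched to the order of vanishing of $g$ at the endpoint. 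Reconciling these endpoint constraints with the single sign condition $h(Y_R)<0$ is the technical heart of the argument.
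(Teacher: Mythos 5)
Your proposal follows essentially the same route as the paper's proof: expand $|\Phi_\varepsilon(iy)|^2$ to first order in $\varepsilon$, choose $q$ so that $q(iY_R)$ is phase-opposed to $\Phi(iY_R)$ (making the first-order term strictly negative, in fact $-2$, at $Y_R$), protect the interior $(0,Y_R)$ via the strict inequality, and push past $Y_R$ by continuity. The substantive difference is the degree of the perturbation. You take $q(z)=cz^{2}$ with $c=\Phi(iY_R)/Y_R^{2}$, whereas the paper takes $q(z)=-ie^{i\theta}z^{3}/Y_R^{3}$ with $\Phi(iY_R)=e^{i\theta}$, so that the $O(\varepsilon)$ term is proportional to $y^{3}$. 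That cubic, phase-matched choice is precisely what dissolves what you call the ``technical heart'': since $q=O(z^{3})$, the perturbation never touches the quadratic coefficient, so in the case $a_2>1/2$ the leading behavior $g(y)\approx(1-2a_2)y^{2}<0$ dominates the $O(\varepsilon y^{3})$ term near the origin uniformly for small $\varepsilon$, and no side condition such as $\mathrm{Re}(c)\ge 0$ ever arises; your $z^{2}$ perturbation shifts that coefficient directly, which is exactly what creates the borderline bookkeeping you struggle with. Two further remarks. First, your transversality hypothesis $g'(Y_R)>0$ is unnecessary (and the paper does not use it): once $p_\varepsilon(Y_R)<1$ strictly, continuity of the fixed polynomial $p_\varepsilon$ in $y$ alone yields some $Y>Y_R$ with $p_\varepsilon\le 1$ on $[0,Y]$, so a non-transversal zero of $g$ at $Y_R$ is not an obstruction and need not be treated as a separate case. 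Second, the genuinely degenerate situation $a_2=1/2$ (where $g$ vanishes to fourth order at the origin) is in fact also glossed over by the paper: when $\mathrm{Re}\bigl(\Phi(iY_R)\bigr)$ has the unfavorable sign, even the cubic perturbation is positive of order $y^{3}$ near $0$ and can overwhelm the $O(y^{4})$ decay of $g$; the repair is the one you sketch, namely matching the degree of the (odd-power, phase-opposed) perturbation to the order of vanishing of $g$ at the origin. So, modulo replacing your $z^2$ by the paper's phase-matched odd power, your argument is the paper's argument.
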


\begin{proof}
We have 
\begin{align}
    \Phi(z) = 1 + z+ a_2 z^2 +a_3z^3+\hdots
\end{align}

The condition for stability on $iy \in [0, iY_R]$  is $p(y) := |\Phi(iy)|^2 \leq 1 $. 
\begin{align}
   p(y) =  |\Phi(iy)|^2 &= |1 + iy+ a_2 (iy)^2 +a_3(iy)^3+\hdots|^2 \\
    &= |(1 -a_2 y^2 +\hdots) + i(y -a_3y^3+\hdots)|^2 \\
     &= (1 -a_2 y^2 +\hdots)^2 + (y -a_3y^3+\hdots)^2. 
\end{align}
Since the $a_n$ are real,  $p(y)$ can only have even powers of $y$.

Now consider a small complex perturbation $\epsilon q$ to $\Phi_\epsilon(z)$ for $\epsilon>0$ such that
\begin{align}
\Phi_\epsilon(z) &= \Phi(z) + \epsilon q(z),\\
   p_\epsilon(y) &= |\Phi_\epsilon(iy)|^2 = (\Phi(iy) + \epsilon q(iy)) \overline{(\Phi(iy) + \epsilon q(iy))}\\
     &= \Phi(iy)  \overline{\Phi(iy)} +  \Phi(iy) \overline{\epsilon q(iy)} + \epsilon q(iy)) \overline{\Phi(iy)} + \epsilon q(iy) \overline{\epsilon q(iy)},\\
   \frac{d p_\epsilon(y)}{d \epsilon}& =  \Phi(iy)\overline{q(iy)} + q(iy) \overline{\Phi(iy)}  +O(\epsilon) = 2 \text{Real}(\Phi(iy)\overline{q(iy)}) +O(\epsilon).
\end{align}

At $y = Y_R$, we have that $p(iY_R) = 1$ since $\Phi(z)$ is optimal. This implies that $|\Phi(iY_R)| =1$ and $\Phi(iY_R) = e^{i\theta}$  for some real $\theta$. If we pick the perturbation $q(z) = -ie^{i\theta}z^3/Y_R^3$, this introduces odd terms in $p_\epsilon(y)$ according to 
\begin{align}
  p_\epsilon(y) &= p(y) +\epsilon \frac{d p_\epsilon(y)}{d \epsilon} +O(\epsilon^2)\\
  &=  |\Phi(iy)|^2 -2 \epsilon \frac{y^3}{Y_R^3} \text{Real}(\Phi(iy))+O(\epsilon^2).
\end{align}
 For the interior points $y \in (0, Y_R)$ , $p(y) = |\Phi(iy)|^2 < 1$. Thus, we can pick a sufficiently small $\epsilon$ so that $ p_\epsilon(y) \leq 1$ on  $y \in (0, Y_R)$ .

At the boundaries, we have 
\begin{align}
  \frac{d p_\epsilon(0)}{d \epsilon} =   0 ,\quad \frac{d p_\epsilon(Y_R)}{d \epsilon} =   -2
\end{align}

This implies that $p_\epsilon(Y_R) < 1$ for sufficiently small $\epsilon$. By continuity of $p_\epsilon(y)$ in $y$, there exists some $Y>Y_R$ such that  $|\Phi_\varepsilon(i y)|\leq 1$ for all $y\in[0,Y]$ .
\end{proof}

Theorems \ref{theorem1} and Theorem \ref{theorem2} demonstrate that complex time integrators constructed through the optimization of their stability regions can possess substantially larger stability domains than their real-coefficient counterparts. Consequently, optimized complex time integrators can achieve faster and more efficient time integration, as will be illustrated next.

\subsection{Numerical example: The nonlinear Schrodinger equation}

To demonstrate the practical efficiency of using the constructed complex time integrators with expanded stability regions, we consider the nonlinear Schrodinger equation.
\begin{align}\label{eq:nonlinearsch}
    i u_t +\frac{1}{2}u_{xx} + |u|^2  u  = 0, \quad x \in [-2\pi, 4 \pi], \quad t\in [0, 6].
\end{align}
The initial condition $u(x,0) = u_0$ is chosen to admit the following soliton exact solution.
\begin{align}
    u(x,t) = \sqrt{2}\sech(\sqrt{2}(x-t))e^{i(x+0.5t)}.
\end{align}
Spatial differentiation is done using the Fast Fourier Transform (FFT) with 100 modes \cite{trefethen2000spectral, fornberg1998practical}. Periodic boundary conditions are imposed on the domain. This leads to a discretized system similar to the form discussed in the previous section.

We simulate the nonlinear Schrodinger equation \eqref{eq:nonlinearsch} with the optimally stable real and complex integrators  (Equations \eqref{eq:stabschro_real} and \eqref{eq:stabschro_complex}) obtained in the previous subsection (see also Figure \ref{4_4equation} again). The optimally stable real integrator advances the solution from $y^n$ in two stages. Similar to the midpoint method, an intermediate value is first computed using a forward Euler step,
\begin{align}
y^{(1)} = y^n + \Delta t\,F(y^n),    
\end{align}
and then the solution after a time step $\Delta t$ is updated as 
\begin{align}
    y^{n+1} = y^n + \Delta t\,F\bigl(y^{(1)}\bigr),
\end{align}
where $F(y)$ denotes the right-hand side of the nonlinear Schrödinger equation after spatial discretization. The resulting stability polynomial is given by Equation \eqref{eq:stabschro_real}.

The optimally stable complex integrator follows a similar two-stage structure but employs a complex intermediate step.
\begin{align}
y^{(1)} &= y^n + \frac{(1-i)}{2}\Delta t\,F(y^n),  \\
    y^{n+1} &= y^n + \Delta t\,F\bigl(y^{(1)}\bigr).
\end{align}
The resulting stability polynomial is given by Equation \eqref{eq:stabschro_complex}.

Figure \ref{4_5equation} shows the error and computational time associated with various time steps. The optimal complex integrator (green) allows for a stable step size twice as large with even smaller error when compared to the optimal real integrator (magenta). Note that there is no additional computational effort for to the complex integrator, since the Schrodinger equation already requires complex arithmetic even for a real integrator. 

\begin{figure}
\begin{centering}
    \includegraphics{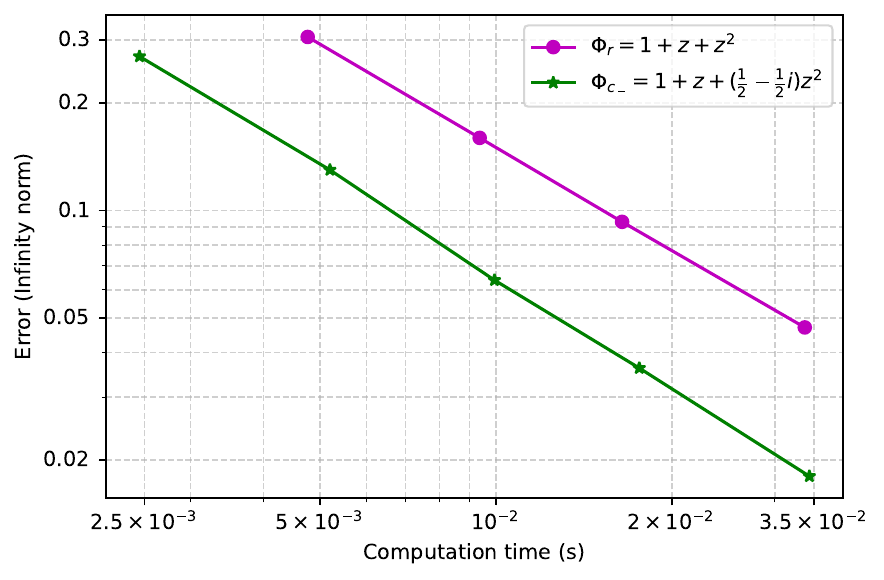}
    \caption{Simulation of the nonlinear Schrodinger equation \eqref{eq:nonlinearsch} using the optimally stable 2-stage first-order real (magenta, Equation \eqref{eq:stabschro_real}) and complex (green, Equation \eqref{eq:stabschro_complex}) integrators for $\Delta t = 0.014, 0.007, 0.0035, 0.002, 0.001$. The largest time step $\Delta t = 0.014$ is unstable and not shown for the real integrator. Errors and computational times are averaged across 1000 runs. The complex integrator allows for twice as large stable time steps and  half as much computational effort for slightly lower error.}
\label{4_5equation}
\end{centering}
\end{figure}

We show through this example that complex steps/coefficients in a time integrator can expand stability regions beyond real equivalents and allow for larger time steps. Even larger regions may be possible using methods with more stages/sub-steps, but the cost trade-off between increased evaluations and increased step size would need to be evaluated on a problem specific basis. While our primary examples have been the Schrodinger equation, they demonstrate the broader utility of complex time integration. In particular, they suggest a pathway to more efficient simulations of the complex-valued Kohn-Sham equations, and potentially other quantum systems, by exploiting asymmetric stability regions to allow larger stable time steps

\section{The Projective Integration method with complex time steps} \label{sec:PI}

As mentioned earlier, the idea of sacrificing formal high-order accuracy while extending the stability region is not new. A recently popular method called Projective Integration (PI) adopts this strategy to efficiently integrate stiff systems with spectral gaps (see \cite{maclean2015convergence,pi1, pi5, pi4}). In this section, we derive the first complex extensions of PI schemes and elaborate that those are ideally suited for time integration of stiff systems with complex eigenvalues and spectral gaps. We begin with an explanation of the PI method, followed by a generic example and applications.

\subsection{Projective Integration}
Projective Integration (PI) schemes are explicit schemes tailored to the stable integration of stiff ODE systems with spectral gaps. This means that they target system of the form $\dot{y} = A y$, where $y\in \mathbb{R}^n$ and $A\in \mathbb{R}^{n \times n}$ is a matrix with two distinct clusters of eigenvalues $\sigma_f$ and $\sigma_s$, representing fast and slow modes, respectively. The slow modes $\lambda_s \in \sigma_s$ evolve on the time scale $\mathcal{O}(1)$ and are most relevant for accuracy, but this requires the fast modes $\lambda_f \in \sigma_f$ evolving on the short time scale $\mathcal{O}(\epsilon)$ to be integrated in a stable manner. For traditional, explicit schemes like the FE scheme, this usually results in a prohibitively small time step $\Delta t \approx \epsilon$. PI methods overcome this by damping out the fast modes in a series of small (sub-)steps of size $\delta t \approx \epsilon$ and then extrapolating the result over the remainder of a longer time step $\Delta t$, see \cite{pi1}.

A PI scheme in its simplest form utilizes a small number of $K+1$ small forward Euler time steps of size $\delta t \approx \epsilon$ followed by a larger extrapolation step of the remaining time step $\Delta t - (K+1)\delta t$. This is commonly referred to as the Projective Forward Euler (PFE) method, which can be written using $y^{n,0}=y^n$ and
\begin{eqnarray}
  y^{n,1} &=& y^{n,0} + \delta t \cdot f\left(y^{n,0}\right) \\
  y^{n,2} &=& y^{n,1} + \delta t \cdot f\left(y^{n,1}\right) \\
   &\vdots&  \\
  y^{n,K+1} &=& y^{n,K} + \delta t \cdot f\left(y^{n,K}\right) \\
  y^{n+1} &=& y^{n,K+1} + (\Delta t - (K+1)\delta t) \cdot \frac{y^{n,K+1}-y^{n,K}}{\delta t}.
  \label{e:PI_def}
\end{eqnarray}
In \cite{KOELLERMEIER2025116147} it was recently shown that the PFE method can be written as a standard RK scheme with $K+1$ stages using $\Lambda := \frac{\delta t}{\Delta t}$ and the Butcher tableau given by \eqref{tab:Butcher_PFE_as_RK}.
\begin{equation}
\begin{array}{ c | c c c c }
 0 & 0 & & & \\
 1 \cdot \Lambda & \Lambda & 0 & & \\
 \vdots & \vdots & \ddots & \ddots & \\
 K \cdot \Lambda & \Lambda & \dots & \Lambda & 0 \\
 \hline
  & \Lambda & \dots & \Lambda & 1 - K \Lambda
  \label{tab:Butcher_PFE_as_RK}
\end{array}
\end{equation}
The stability polynomial of a PFE scheme with $K+1$ inner steps is given by (compare \cite{lafitte_high-order_2016})
\begin{align}
    \Phi(z) =
(1+\Lambda z)^{K}
\left( 1 + (1-K\Lambda) z \right).
\end{align}

Up to now PI schemes have been used exclusively with real time step sizes $\Delta t$ and $\delta t$. This limited their applicability to problems where the eigenvalues have small imaginary parts, as the inner integrators are typically not able to capture eigenvalues with significant imaginary parts. Allowing complex time steps opens up the possibility of larger stable time steps, extending PI methods to  applications with larger imaginary eigenvalues.

In the remainder of this section, we will apply complex time stepping to PI schemes and use this to solve both real-valued and complex-valued systems.

\subsection{Complex PI schemes for a complex-valued stiff equation}
Real Forward Euler steps are typically used as the inner steps in PI schemes, while either first-order FE or higher order RK schemes are utilized for the outer steps, see \cite{lafitte_high-order_2016} for high order PI schemes. If $\Phi_{outer}$ and $\Phi_{inner}$ denote the stability polynomials corresponding to the outer step and the inner step(s), respectively, the full stability polynomial $\Phi_{PI}$ for a PI scheme can be expressed as as a concatenation 
\begin{align}
    \Phi_{PI}(z) = \Phi_{inner}(z) \cdot \Phi_{outer}(z).
\end{align}

If the inner step(s) consist of $K+1$ Forward Euler time steps with variable time step sizes $\delta t_n = a_n \Delta t$, then $\Phi_{inner}$ can be written as a concatenation of these substeps.
\begin{align}
    \Phi_{inner}(z) = \prod_{n=1}^{K+1} (1 + a_n z).
\end{align}

The roots of $\Phi_{inner}(z)$ are located at $- 1/a_n$. For real time steps ($a_n \in \mathbb{R}$), this results in stability regions that are centered around the negative real axis. For stiff problems where the eigenvalues contain a significant imaginary component, such stability regions centered on the real axis can lead to instability. 

To illustrate this, consider the Prothero-Robinson example, a classic stiff differential equation \cite{leveque2007finite}, in Equation \eqref{eq:classic_stiff}
\begin{align}
\frac{dy}{dt} = \lambda (y - \cos{t}) - \sin{t}, \quad y(0) = \frac{3}{2}.
    \label{eq:classic_stiff}
\end{align}
For large values of $\lambda$, Equation \eqref{eq:classic_stiff} has fast transient dynamics away from its steady state solution $y = \cos (t)$. Although the scalar Equation \eqref{eq:classic_stiff} does not contain both slow and fast dynamics, it suffices as a simple example to outline the ability of complex PI schemes to capture fast dynamics for problems with non-negligible imaginary eigenvalue parts.

We now consider the complex-valued case where $\lambda \in \mathbb{C}$ can be represented as $\lambda = -\frac{1}{\varepsilon} + \xi i$ where $\varepsilon \in \mathbb{R}_+$, $\xi \in \mathbb{R}$ and $\xi i$ is a small imaginary perturbation around the dominant stiff component $-\frac{1}{\varepsilon}$.

We solve the Prothero-Robinson equation \eqref{eq:classic_stiff} numerically  using two different PI methods: 
\begin{enumerate}[label=(\Alph*)]
    \item \textbf{Real Projective Forward Euler (PFE):} two real inner steps 
    \(\delta t = \mathrm{Re}(-1/\lambda)\) 
    and the outer extrapolation step for the remainder of  
    \(\Delta t = 0.05\).
    
    \item \textbf{Complex Projective Forward Euler (cPFE):} two complex inner steps 
    \(\delta t = -1/\lambda\) 
    and the outer extrapolation step for the remainder of 
    \(\Delta t = 0.05 \).
\end{enumerate}
For both schemes, given the numerical solution $y^n$
at time $t$, the solution $y^{n+1}$ at time $t + \Delta t$ is computed via (compare \eqref{e:PI_def})
\begin{eqnarray}
  y^{n,1} &=& y^{n} + \delta t \cdot f\left(y^{n}\right), \label{eqn:PI_trad_0}\\
  y^{n,2} &=& y^{n,1} + \delta t \cdot f\left(y^{n,1}\right), \\
  y^{n+1} &=& y^{n,2} + (\Delta t - 2 \delta t) \cdot \frac{y^{n,2}-y^{n,1}}{\delta t},
  \label{eqn:PI_trad}
\end{eqnarray}
where $f(y)$ is the right hand side of Equation \eqref{eq:classic_stiff}.
Note that  \eqref{eqn:PI_trad_0} -\eqref{eqn:PI_trad} is equivalent to the following two-step Forward Euler update:
\begin{eqnarray}
  y^{n,1} &=& y^{n} + \delta t \cdot f\left(y^{n}\right) \\
  y^{n+1} &=& y^{n,1} + (\Delta t - \delta t) \cdot f\left(y^{n,1}\right).
  \label{PI_new}
\end{eqnarray}

The stability diagrams for the real PFE and the complex cPFE described above as (A) and (B), respectively, are illustrated in Figure \ref{fig:PI_complex_stab}. We can see that the stability region of the cPFE method encompasses the complex eigenvalue $\lambda$ while the real PFE method fails to do so. 
\begin{figure}
\begin{centering}
\includegraphics{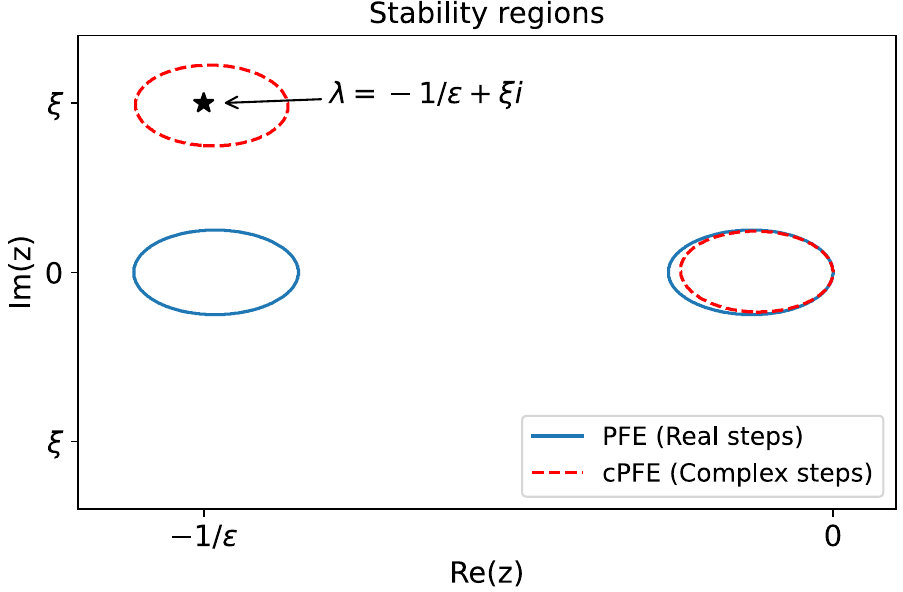}
\caption{Stability regions corresponding to the real PFE method (blue) and complex cPFE method (red), together with the location of the complex eigenvalue~$\lambda$ in \eqref{eq:classic_stiff}.}
\label{fig:PI_complex_stab}
\end{centering}
\end{figure}
We present numerical results in Figure \ref{fig:PI_complex_num} for two representative cases: 

\begin{enumerate}
    \item \(\lambda = -10^6 + 15i\), i.e., $\varepsilon = 10^{-6}, \xi = 15$, corresponding to a small imaginary part
    \item \(\lambda = -10^6 + 20i\), i.e., $\varepsilon = 10^{-6}, \xi = 20$, corresponding to a larger imaginary part.
\end{enumerate}
In both cases, the real part of $\lambda$ is much larger in magnitude than the imaginary part. 

For $\lambda = -10^6 + 15i$ in Figure \ref{fig:PI_complex_num} (left), the real PFE solution is oscillating but the oscillations are damped in time and the method is therefore stable. However, for only a relatively small increase in the imaginary part from $\lambda = -10^6 + 15i$ to  $\lambda = -10^6 + 20i$ in Figure \ref{fig:PI_complex_num} (right), the real PFE solution has persisting oscillations that are only semi-stable. The complex cPFE solution, however, is stable in both cases. This clearly demonstrates the benefits of complex time integration for this first example of PI schemes. However, since this example lacks a spectral gap, we next turn to a more challenging system exhibiting coupled slow and fast dynamics.

\begin{figure}
\begin{centering}
\includegraphics{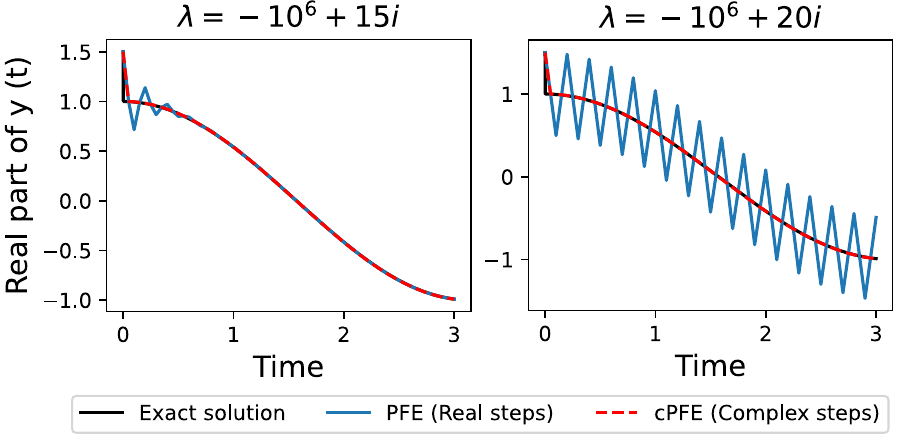}
\caption{Numerical solutions of the Prothero–Robinson example~\eqref{eq:classic_stiff} for a small imaginary component, $\lambda = -10^6 + 15i$ (left), and a larger imaginary component, $\lambda = -10^6 + 20i$ (right).
The exact solution is shown in black, the real PFE in blue, and the complex cPFE in dashed red.
The complex cPFE method remains stable and free of oscillations in both cases due to its stability region being shifted off the real axis, allowing it to capture the complex eigenvalue~$\lambda$.
}
\label{fig:PI_complex_num}
\end{centering}
\end{figure}

\subsection{Complex PI schemes for real-valued systems}
After the stiff scalar but complex-valued example, we now consider the two-scale, potentially stiff but real-valued system
\begin{align}
    \dot{y_1}&=y_2, \label{eq:stiff_system1}\\
    \dot{y_2}&=-2\mu y_2-\omega_0^2 y_1, \\
    \dot{y_3}&=-\lambda(y_3-y_1). \label{eq:stiff_system3}
\end{align}
The system \eqref{eq:stiff_system1}-\eqref{eq:stiff_system3} models the slow decay of a variable $y_3$ to zero with decay rate $\lambda \in \mathbb{R}_+$, perturbed by the solution $y_1$ of a damped oscillator with natural frequency  $\omega_0 \in \mathbb{R}_+$ and viscous damping coefficient $2\mu \in \mathbb{R}_+$. For a heavily damped system, we can write $\mu$ as $\mu = 1/\varepsilon$, where $ 0 < \varepsilon \ll 1$. We further assume a highly oscillatory system where $\omega_0^2 = 1/\varepsilon^2 +\delta^2$, for some $\delta > 0$. Here, we are interested in computing a stable solution for $y_3$ without necessarily resolving the fast dynamics of $y_1$.

System \eqref{eq:stiff_system1}-\eqref{eq:stiff_system3} can be written as the linear system
\begin{align}
    \left(\begin{array}{l}
    \dot{y_1} \\ \dot{y_2} \\ \dot{y_3}
    \end{array}\right)=\underbrace{\left(\begin{array}{ccc}
    0 & 1 & 0 \\
    -1/\varepsilon^2 -\delta^2 & -2/\varepsilon & 0 \\
    \lambda & 0 & -\lambda
    \end{array}\right)}_A \left(\begin{array}{l}
    y_1 \\ y_2 \\ y_3
    \end{array}\right),
    \label{eqn:linearsystem}
\end{align}
where the system matrix $A \in \mathbb{R}^{3 \times 3}$ has eigenvalues
\begin{align}
\lambda \in \sigma(A) = \left\{-\lambda, -\frac{1}{\varepsilon}+i\delta,-\frac{1}{\varepsilon}-i\delta\right\},
\end{align}
thus resulting in two complex conjugated eigenvalues. 

For $\lambda \ll  1/\varepsilon$, the system is stiff, with scale separation between the fast oscillatory movement of $y_1$ and the slow decay of $y_3$ as illustrated in Figure \ref{fig:PI_complex2} (right). We consider the parameters $\lambda = 1, \varepsilon = 0.1, \delta = 22.913$ with the initial conditions $(y_1,y_2,y_3) = (4, 2, 5)$. The eigenvalues are shown as black dots in Figure \ref{fig:PI_complex2} (left), together with the stability region of the real PI method (blue),  and the stability region of the complex PI method (red). Both methods use two inner evaluations of the right-hand side (Equation \eqref{eqn:linearsystem}) with inner step size $\delta t$. For the real PI method, $\delta t = 1/(-\frac{1}{\varepsilon})$, while for the complex PI method, $\delta t = 1/(-\frac{1}{\varepsilon} \pm i\delta )$. The real PI stability region includes only a narrow region around the imaginary axis while the complex PI stability region encompasses the complex eigenvalues. This means that a real PI method would lead to unstable numerical simulations for this application, while the complex PI method is stable by construction.
\begin{figure}
\includegraphics[width= \textwidth]{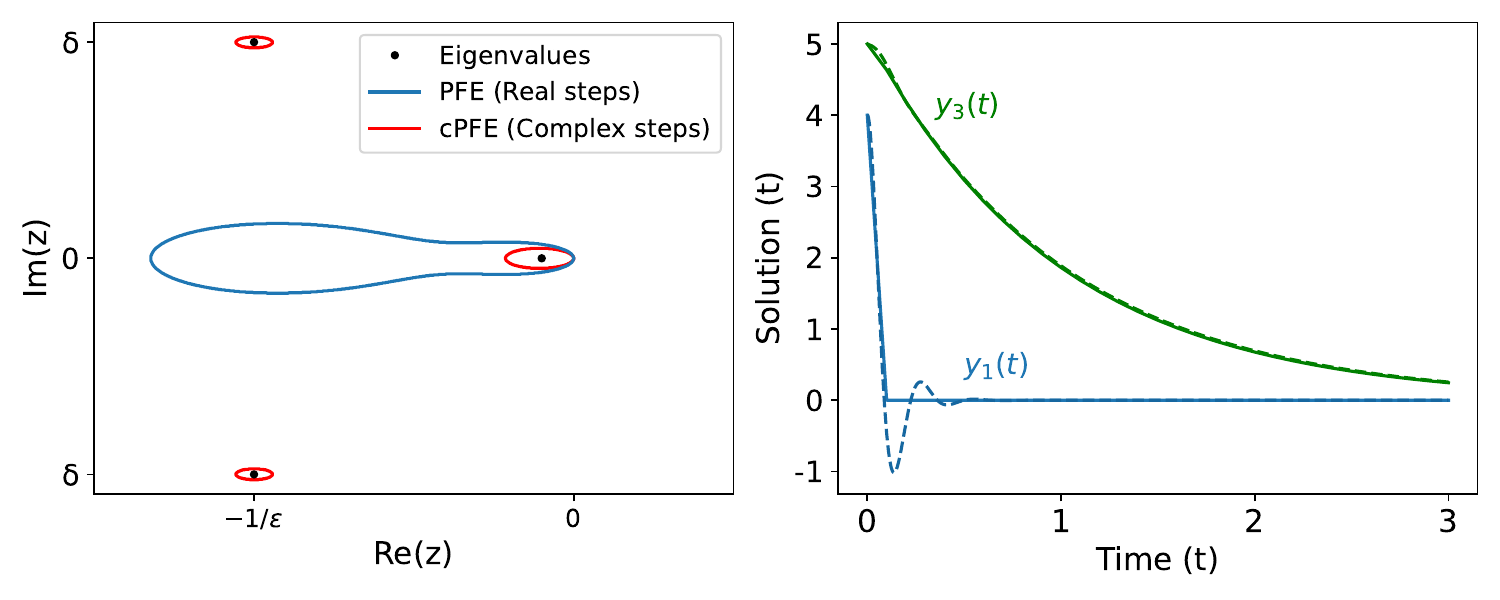}
    \caption{Left: Spectrum of the system \eqref{eqn:linearsystem}, consisting of one real eigenvalue associated with slow decay and a pair of complex conjugate eigenvalues modeling fast oscillatory dynamics. The stability region of the complex cPFE method (red) encloses all eigenvalues, while the real PFE stability region (blue) fails to capture the oscillatory modes. Right: Time evolution of \eqref{eq:stiff_system3}–\eqref{eq:stiff_system3} for parameters $\lambda = 1, \varepsilon = 0.1, \delta = 22.913$ with the initial conditions $(y_1,y_2,y_3) = (4, 2, 5)$.  $y_3$ is influenced by the oscillations of $y_1$ in the beginning, but $y_1$ quickly gets damped to 0, then $y_3$ slowly decays further. Dashed lines indicate the exact solution, while solid lines show the numerical solution obtained using the stable cPFE method (the real PFE method is unstable at this step size). The outer step is $\Delta t = 0.1$ with two complex inner steps $\delta t = 1/(-\frac{1}{\varepsilon} \pm i\delta )$. }
  
\label{fig:PI_complex2}
\end{figure}

This example has been constructed to reflect the regime in which PI is intended to be used. The system exhibits a clear spectral gap, with a slow mode associated with the real eigenvalue  $-\lambda$ and fast modes associated with eigenvalues of large negative real part and non-negligible imaginary component. In PI, the outer time step is chosen according to the slow time scale of interest, while the role of the inner steps is to stabilize the fast dynamics. Consequently, we select $\Delta t \approx \mathcal{O}(\frac{1}{\lambda})$ corresponding to the slow decay rate, and do not adjust this outer step to accommodate stability restrictions imposed by the fast modes. At this slow-scale step size, a real PI scheme becomes unstable due to the oscillatory fast eigenvalues, whereas a complex PI scheme remains stable by appropriately shifting the stability region.

A stable real PI or real Forward Euler method is technically possible, but would require a much smaller time step, e.g.,  $\Delta t = 1/\|-\frac{1}{\varepsilon}+i\delta \|$ in case of the explicit Euler method. Such a choice would force the method to resolve the fast dynamics and negate the computational advantage that PI is designed to provide. In contrast, for a complex PI scheme, a large outer time step can simply be chosen as $\Delta t = \frac{1}{\lambda}$ and the inner time step should be $\delta t = 1/\left(-\frac{1}{\varepsilon}+i\delta \right)$ including a complex part that allows to capture the complex eigenvalues. This results in a stable and efficient numerical solution as shown in Figure \ref{fig:PI_complex2} (right).

\section{Discussion}
This paper explores the benefits of leaving the real line for the numerical time integration of stiff problems. We show analytically and numerically that complex time stepping can be used to expand stability regions, allowing larger, stable time steps. Expanded stability regions have been previously designed using Runge-Kutta methods \cite{riha1972optimal, ketcheson2013optimal}, but we show that complex coefficients in the optimal stability polynomial allow for even larger stability regions, particularly for systems with complex-valued spectra.

The fact that the stability region can be increased by changing the integration domain rather than the integration method may also be appealing in scenarios where particular integrators with specific properties (such as nonlinear stability) for specific differential equations are desired. Although our analysis is restricted to linear stability, the numerical results for the nonlinear Schrodinger equation suggest that complex time-stepping strategies can be effective in nonlinear settings as well. This observation points toward the possibility that complex integrators could satisfy nonlinear stability guarantees such as Strong Stability Preserving properties \cite{gottlieb2001strong,gottlieb2009high}, although establishing such guarantees lies beyond the scope of the present work.

We intend this paper to be an introduction to and initiation of the search for efficient time integrators that take advantage of paths in the complex plane. For simulating quantum systems with inherently complex-valued equations \cite{kononov2022electron}, complex integrators allow for expanded stability regions at minimal computational cost, making it a particularly promising application of our method. We have mostly focused on creating integrators with improved stability, but  many other desirable properties including energy and momentum preservation, remain to be explored. The complex plane offers an extra dimension on which any integrator can be improved, opening up a new realm of possibilities when designing time-stepping methods.

\section{Acknowledgements}

We would like to thank Emil Constantinescu, David Ketcheson, Alvin Bayliss and David Chopp for extensive feedback and valuable suggestions.

\bibliographystyle{unsrt}
\bibliography{references}

\end{document}